\documentclass[11pt]{article}

\usepackage[a4paper,margin=1in]{geometry}
\usepackage{amsmath,amssymb,amsthm,mathtools}
\usepackage[T1]{fontenc}
\usepackage[utf8]{inputenc}
\usepackage{lmodern}
\usepackage{enumitem}
\usepackage{hyperref}

\newtheorem{theorem}{Theorem}[section]
\newtheorem{proposition}[theorem]{Proposition}
\newtheorem{lemma}[theorem]{Lemma}
\newtheorem{corollary}[theorem]{Corollary}
\newtheorem{remark}[theorem]{Remark}

\newtheorem{example}[theorem]{Example}

\newcommand{\OK}{\mathcal O_K}

\newcommand{\D}{\mathfrak D}

\title{Iterated extensions and the ramification dichotomy\thanks{This work was
supported by the project ``Group schemes, root systems, and related
representations'' funded by the European Union -- NextGenerationEU through
Romania's National Recovery and Resilience Plan (PNRR) call no.
PNRR-III-C9-2023-I8, Project CF159/31.07.2023, and coordinated by the Ministry
of Research, Innovation and Digitalization (MCID) of Romania.}}
\author{Mugurel Barcau and Vicen\c tiu Pa\c sol\\
\small Institute of Mathematics of the Romanian Academy, Bucharest\\
\small \texttt{mugurel.barcau@imar.ro; vicentiu.pasol@imar.ro}}
\date{}

\begin{document}
\maketitle

\begin{abstract}
Let $K/\mathbb Q_p$ be finite and let $f\in\OK[X]$ be monic, of degree at least
two, with $f'(X)\in\mathfrak m_K\OK[X]$, equivalently $\bar f\in k[X^p]$.
For a compatible inverse branch $f(t_{n+1})=t_n$ with $t_0\in\OK$, put
$K_n=K(t_n)$ and $K_\infty=\bigcup_nK_n$.  We prove that $K_\infty/K$ is either
unramified or deeply ramified.  More precisely, once ramification appears, the
ramification indices over the maximal unramified subfields tend to infinity and
the finite-level differents are unbounded.  In the Frobenius-type case
$f(X)\equiv X^{p^a}\pmod{\mathfrak m_K}$ the unramified alternative is trivial,
so $K_\infty=K$ or $K_\infty/K$ is deeply ramified.  After completion, the
non-unramified alternative gives perfectoid fields and examples show that
APF property need not hold at the algebraic level.
\end{abstract}

\section{Introduction and statement of the main theorem}

Let $K/\mathbb Q_p$ be finite, with ring of integers $\OK$, maximal ideal
$\mathfrak m_K$, and residue field $k$.  Fix an algebraic closure
$\overline K$, and let $K^{\rm ur}$ denote the algebraic maximal unramified
extension of $K$ inside $\overline K$.  We normalize the valuation $v_K$ by
\[
        v_K(\pi_K)=1.
\]

Let $f\in\OK[X]$
be monic of degree $d\geq2$, and assume
\begin{equation}\label{eq:derivative-hypothesis}
        f'(X)\in \mathfrak m_K\OK[X].              \tag{1.1}
\end{equation}
Equivalently, the reduction $\bar f\in k[X]$ satisfies $\bar f'(X)=0$, hence
$\bar f\in k[X^p]$.

Let $t_0\in\OK$ and choose a compatible backward orbit
\[
        f(t_{n+1})=t_n\qquad(n\geq0),
\]
with $t_n\in\overline K$.  Since $f$ is monic and $t_0$ is integral, every
$t_n$ is integral.  Put
\[
        K_n:=K(t_n),\qquad K_\infty:=\bigcup_{n\geq0}K_n.
\]
We call $K_\infty/K$ the iterated tower attached to $f$, $t_0$, and the chosen
inverse branch.  For every $n$, set
\[
        E_n:=K_n\cap K^{\rm ur},\qquad e_n:=[K_n:E_n].
\]
Then $E_n/K$ is unramified and $K_n/E_n$ is totally ramified.  Thus
$K_\infty/K$ is unramified if and only if $e_n=1$ for all $n$.

The main result is the following dichotomy for the iterated tower attached to the chosen inverse branch.

\begin{theorem}\label{thm:main}
With notation as above, $K_\infty/K$ is either unramified or deeply ramified.
\end{theorem}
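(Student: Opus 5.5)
The goal is to show that once some $e_N>1$, the differents $\D(K_n/K)$ have unbounded valuation, since $K_\infty/K$ deeply ramified is equivalent to this. The dichotomy then follows: either $e_n=1$ for all $n$, in which case $K_\infty/K$ is unramified, or some $e_N\ge 2$ and the tower is deeply ramified. I will work over the maximal unramified subfields. Unramified base change does not change differents or the hypothesis (1.1), so the statement reduces to the totally ramified pieces $K_n/E_n$, or more conveniently to the compositum $K_n K^{\rm ur}$ after completion.

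\textbf{Step 1: a step-by-step different estimate from (1.1).} The element $t_{n+1}$ is a root of $g_n(X)=f(X)-t_n\in \mathcal O_{K_n}[X]$. Let $h_n$ be the minimal polynomial of $t_{n+1}$ over $K_n$; it divides $g_n$. Then
\[
v\bigl(\D(K_{n+1}/K_n)\bigr)\le v\bigl(h_n'(t_{n+1})\bigr)\le v\bigl(g_n'(t_{n+1})\bigr)=v\bigl(f'(t_{n+1})\bigr)\;\ge\; v(\pi_K).
\]
These inequalities point the wrong way for a lower bound, so I will use the hypothesis differently. Since $\bar f\in k[X^p]$, we can write $f(X)=F(X^p)+\pi_K G(X)$ with $F,G$ integral. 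Hence $t_n\equiv F(t_{n+1}^p)\pmod{\pi_K}$, and modulo $\pi_K$ every $t_n$ is (up to applying $F$) a $p$-th power of an element of $\mathcal O_{K_{n+1}}/\pi_K$. This is the "Frobenius is surjective modulo $p$" mechanism that characterizes deep ramification. The criterion I will use (Gabber--Ramero / Coates--Greenberg) is that $K_\infty/K$ is deeply ramified iff Frobenius on $\mathcal O_{K_\infty}/p$ is surjective, iff $\Omega_{\mathcal O_{K_\infty}/\OK}=0$ (after the harmless unramified base change). It is not enough to know that the generators $t_n$ are $p$-th powers mod $p$ up to $F$. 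I also need the uniformizers, or some element of small positive valuation, to become $p$-th powers modulo $\pi_K$.

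\textbf{Step 2: growth of $e_n$.} I will show that if $e_N>1$ then $e_{n+1}/e_n\ge p$ infinitely often, and in fact that the value group keeps acquiring $p$-divisibility. Take $\varpi$ of minimal positive valuation in $K_n$, and write $\varpi$ as an integral polynomial in $t_n$ over $E_n$. Then $\varpi\equiv(\text{polynomial in } t_{n+1}^p)\pmod{\pi_K}$, and by Frobenius-semilinearity on coefficients over the perfect residue field (after adjoining $K^{\rm ur}$) this is $\equiv\beta^p\pmod{\pi_K\mathcal O_{K_{n+1}}}$ for some $\beta\in\mathcal O_{K_{n+1}}$. Therefore either $v(\beta)=v(\varpi)/p$, which gives a new, smaller valuation, or $v(\varpi-\beta^p)\ge 1$. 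The latter is impossible when $v(\varpi)<1$, which holds as soon as $e_n>1$. So $e_{n+1}\ge p\,e_n$ once ramification appears, and $e_n\to\infty$. Iterating, every element of $\mathcal O_{K_\infty}$ of valuation $<1$ is congruent mod $\pi_K$ to a $p$-th power. Combined with $\mathcal O_{K_\infty}$ being generated by the $t_n$ and $E_\infty$, this gives surjectivity of Frobenius on $\mathcal O_{K_\infty}/\pi_K$.

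\textbf{Step 3: conclude.} Surjective Frobenius mod $\pi_K$, together with the nondiscreteness of the value group from Step 2, gives deep ramification: $\Omega_{\mathcal O_{K_\infty}/\OK}$ is $p$-divisible and torsion, hence zero. Equivalently, the differents are unbounded. The main obstacle is Step 2: making "polynomial in $t_{n+1}^p$ is a $p$-th power mod $\pi_K$" rigorous. Coefficients lie in $\mathcal O_{E_{n+1}}$, whose residue field is perfect, but the error terms are only mod $\pi_K$, not mod $p$. When $K/\mathbb Q_p$ is ramified, I will replace $\pi_K$ by $p$ throughout by iterating the decomposition of $f$ $e(K/\mathbb Q_p)$ times, or else work with the Gabber--Ramero criterion stated modulo $\pi_K$, which suffices by their theory.
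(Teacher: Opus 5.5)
There is a genuine gap in Step~2, and the intermediate claim it produces, $e_{n+1}\ge p\,e_n$, is false. Your congruence $x\equiv\beta^p\pmod{\pi_K}$ is correct, but it applies only to $x$ in the orders $\mathcal O_{E_n}[t_n]$ (and their union $R$), and it carries information only when $0<v_K(x)<1$. You assume that an element of minimal positive valuation of $K_n$ lies in $\mathcal O_{E_n}[t_n]$. At the end you also assume that $\mathcal O_{K_\infty}$ is generated by the $t_n$ and $\mathcal O_{E_\infty}$. Neither holds in general, because $\mathcal O_{E_n}[t_n]$ is usually a non-maximal order.

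For a counterexample, let $p$ be odd, $K=\mathbb Q_p$, $f(X)=X^p+pX^2+p^{10}X$ and $t_0=p^{18}$. Then $f'\in p\mathbb Z_p[X]$, and even $\bar f=X^p$.
\begin{itemize}
\item The Newton polygon of $f(X)-p^{18}$ has a segment of length $2$ and slope $-17/2$. So you may take $t_1$ with $v(t_1)=17/2$, which gives $e_1=2$.
\item The Newton polygon of $f(X)-t_1$ over $K_1$ has a segment of length $2$ and slope $-15/4$. So you may take $t_2$ with $v(t_2)=15/4$ and $[K_2:K_1]\le2$. Hence $4\mid e_2\le[K_2:\mathbb Q_p]\le4$, so $e_2=4<p\,e_1$.
\end{itemize}
The failure is exactly where your argument needs the order to be maximal: $\mathbb Z_p[t_1]=\mathbb Z_p+\mathbb Z_pt_1$ has no element of valuation $1/2$.

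The final transfer fails as well. Take $p\ge5$, $f=X^{2p}$, and $t_n=\varpi_n^3$ with $\varpi_1^2=p$ and $\varpi_{n+1}^{2p}=\varpi_n$, so $t_0=p^{3p}$ and every $K_n$ is totally ramified with $E_n=\mathbb Q_p$. Then $\sqrt p=\varpi_1\in K_1$ lies in no $\mathbb Z_p[t_n]$. So surjectivity of Frobenius on $R/\pi_KR$ would not by itself give surjectivity on $\mathcal O_{K_\infty}/p$.

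You are missing two things: (i) a source of elements of small positive valuation on which the dynamics acts, and (ii) a way to reach deep ramification without describing all of $\mathcal O_{K_\infty}$.
\begin{itemize}
\item For (i), the paper uses Krasner's lemma. Once $e_m>1$, it gives $v_K(t_m-\alpha)\le C_m$ for every unramified $\alpha$. This is applied to the translated tails $u_j=t_{m+j}-\alpha_j$, with $u_j=F_j(u_{j+1})$, $F_j(0)=0$ and $F_j'\in\pi_K\mathcal O[X]$. The contraction $v_K(x)\le\max\{v_K(F(x))-1,\,v_K(F(x))/2\}$ then drives $v_K(u_r)\to0$. Note that it gives only $T-1$ or $T/2$, never $T/p$, which is consistent with the example above.
\item For (ii), the paper works with differents directly. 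Suppose $v_K(\D_{K_N/K})$ stayed bounded. Let $\rho_N$ be a uniformizer of $K_N$ over $B_N=K_{n_0}E_N$, with Eisenstein polynomial $\Phi_N$. Then $v(\Phi_N'(\rho_N))$ is bounded. The $\Phi_N$-adic expansion of $H_N(P_N(X))-u_0$ transports the relation $H_N(u_r)=u_0$ to the element $P_N(0)\in\mathfrak m_{B_N}$, whose ramification is bounded. This contradicts the contraction.
\end{itemize}

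Separately, your Step~3 equivalence is backwards. For a deeply ramified $L/K$ one has $\Omega_{\mathcal O_L/\mathcal O_K}\simeq L/\mathcal O_L\neq0$ (Berger's result recalled in the introduction). The inference ``$p$-divisible and torsion, hence zero'' is a non sequitur. Vanishing of this module is what happens in the unramified alternative, not the deeply ramified one. Surjective Frobenius on $\mathcal O_{K_\infty}/p$ with nondiscrete value group, combined with Gabber--Ramero, would be a legitimate endpoint, but only after you prove it for all of $\mathcal O_{K_\infty}$.
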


Together with the Gabber--Ramero--Scholze characterization of complete rank-one
deeply ramified fields, the theorem has the following perfectoid reformulation.

\begin{corollary}\label{cor:perfectoid-completion}
Let $\widehat{K_\infty}$ be the completion of $K_\infty$ for the valuation
extending $v_K$.  Then either $K_\infty/K$ is unramified, or
$\widehat{K_\infty}$ is a perfectoid field.
\end{corollary}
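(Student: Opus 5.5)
This is a deduction from Theorem~\ref{thm:main} together with the Gabber--Ramero--Scholze characterization. Assume $K_\infty/K$ is not unramified. Then Theorem~\ref{thm:main} says $K_\infty/K$ is deeply ramified. The plan has three steps.

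First, I would transfer deep ramification from $K_\infty$ to its completion $\widehat{K_\infty}$. Recall that an algebraic extension $L/K$ is deeply ramified when $\Omega_{\mathcal O_{\overline K}/\mathcal O_L}=0$. Equivalently (Gabber--Ramero), the Frobenius map $\mathcal O_L/p\to\mathcal O_L/p$ is surjective. Passing to the completion does not change the ring $\mathcal O_L/p$, since $\mathcal O_{\widehat L}/p\cong\mathcal O_L/p$. Hence Frobenius is surjective on $\mathcal O_{\widehat{K_\infty}}/p$ as well.

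Second, I would check that $\widehat{K_\infty}$ is a complete field of the required type:
\begin{itemize}
\item it is complete by construction;
\item its valuation is rank one, because it restricts from $v_K$;
\item its residue characteristic is $p$.
\end{itemize}
It remains to see that the valuation is non-discrete. By the main theorem, the ramification indices $e_n$ tend to infinity. The value group of $K_\infty$ contains $\frac1{e_n}\mathbb Z$ for every $n$, because $K_n/E_n$ is totally ramified of degree $e_n$ and $E_n/K$ is unramified. So the value group is not discrete. (Non-discreteness also follows directly from Frobenius surjectivity on $\mathcal O/p$: a discretely valued field cannot have surjective Frobenius there unless the uniformizer is a $p$-th power modulo $p$, which is impossible.)

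Third, Scholze's definition of a perfectoid field asks for a complete non-discrete rank-one valuation with residue characteristic $p$ and Frobenius surjective on $\mathcal O/p$. All of these were established in the first two steps, so $\widehat{K_\infty}$ is perfectoid.

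The only point requiring care is the first step. One must use the Gabber--Ramero equivalence between the vanishing of $\Omega$ (equivalently, unboundedness of differents, which the theorem provides) and Frobenius surjectivity for algebraic extensions of a local field. I would simply cite this equivalence, as the paper indicates.
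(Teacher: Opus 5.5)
Your argument is correct and follows the paper's route. The paper simply cites Gabber--Ramero [Prop.~6.6.6] and Scholze [Remark~3.3] to pass from deep ramification of $K_\infty/K$ to perfectoidness of $\widehat{K_\infty}$, and you unpack exactly that: deep ramification $\Rightarrow$ Frobenius surjective on $\mathcal O/p$, invariance of $\mathcal O/p$ under completion, and non-discreteness from $e_n\to\infty$ (Proposition~\ref{prop:unbounded-e}).

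One correction: your parenthetical claim that Frobenius surjectivity on $\mathcal O/p$ by itself rules out a discrete valuation is false. For $\mathbb Q_p$ one has $\mathbb Z_p/p=\mathbb F_p$, where Frobenius is bijective, and the uniformizer $p$ is $0=0^p$ modulo $p$. For the same reason, the Gabber--Ramero characterization is not a bare ``equivalence'' with Frobenius surjectivity; it must include non-discreteness. So drop the parenthetical and rely, as you already do, on $e_n\to\infty$.
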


\begin{proof}
If the tower is not unramified, Theorem~\ref{thm:main} gives that
$K_\infty/K$ is deeply ramified.  The completion has rank-one valuation, and
the characterization of Gabber--Ramero, in the form used in Scholze's
formulation of perfectoid fields, identifies complete rank-one deeply ramified
fields with perfectoid fields; see
\cite[Prop.~6.6.6]{GabberRameroAlmost} and \cite[Remark~3.3]{ScholzePerfectoid}.
\end{proof}

The proof first subtracts unramified residue lifts to reduce a ramified tail to
a positive-valuation inverse problem.  A uniform valuation estimate then forces
the ramification indices to be unbounded.  Finally, a different-growth argument
over the varying unramified subfields rules out bounded finite-level differents.

\subsection*{Relation with previous work}

The construction is related to the arithmetic-dynamical study of preimage
fields and arboreal representations, but our emphasis is local and branchwise:
we fix one $p$-adic inverse branch, do not pass to the Galois closure, and study
the purely inseparable reduction condition $\bar f'(X)=0$.  This is opposite in
spirit to the étale good-reduction hypotheses that lead to finite ramification;
see, for example, \cite{AitkenHajirMaire,BostonJones,BridyEtAl}.

The ramification language is closer to the field-of-norms and APF tradition.
The field of norms of Fontaine--Wintenberger, in the form developed by
Wintenberger~\cite{WintenbergerNorms}, is attached to arithmetically profinite
extensions.  Deep ramification was introduced by Coates and Greenberg
\cite{CoatesGreenberg} and later treated, with its relation to APF extensions,
by Fesenko~\cite{FesenkoDeeply}.  We use only the elementary consequence that,
for algebraic extensions of local fields, deep ramification is detected by the
unboundedness of finite-level differents.  The conclusion of
Theorem~\ref{thm:main} is deliberately stated in this weaker language.  Without
extra hypotheses the tower may contain an infinite unramified subextension, as
shown in Section~\ref{sec:apf-nonapf}; such a tower cannot be APF, even though
it is deeply ramified.

The dichotomy also has a differential, or ``étale'', interpretation.  In the
unramified alternative, $\mathcal O_{K_\infty}$ is a filtered union of finite
étale $\mathcal O_K$-algebras, hence
\[
        \Omega_{\mathcal O_{K_\infty}/\mathcal O_K}=0.
\]

In the ramified alternative, the relative module $\Omega_{\mathcal O_{K_\infty}/\mathcal O_K}$
is not expected to vanish. Indeed, for a deeply ramified algebraic extension
\(L/K\), Berger proves that
\[
\Omega_{\mathcal O_L/\mathcal O_K}\simeq L/\mathcal O_L
\]
as an \(\mathcal O_L\)-module \cite[Cor.~1.2]{BergerDifferentials}. Thus deep
ramification is not detected by the vanishing of differentials relative to the
original base field. Instead, if \(F=\widehat L\) denotes the completion of
\(L\), then Gabber--Ramero characterize deep ramification intrinsically by the
weakly étale property of $\mathcal O_F\to\mathcal O_{F^s}$, 
where \(F^s\) is a separable closure of \(F\); equivalently, by the vanishing
of the corresponding module of differentials
\cite[Prop.~6.6.2, Prop.~6.6.6]{GabberRameroAlmost}. In rank one, this is
precisely the condition that leads to perfectoid fields in Scholze's framework
\cite[Remark~3.3]{ScholzePerfectoid}. Thus Theorem~\ref{thm:main} may be viewed
as an étaleness dichotomy: an inverse branch is either ind-étale over the
original valuation ring, or the completion of the resulting extension
enters the weakly étale/perfectoid world of Gabber--Ramero.
This differential viewpoint is also central in the work of
Iovita--Zaharescu on the Galois theory of $B_{\rm dR}^{+}$ (see \cite{IovitaZaharescuBdR}).

The non-APF
phenomenon itself is already present in Fesenko's work, where deeply ramified
extensions which are not APF are constructed by ramification-theoretic and
class-field-theoretic methods, including examples involving infinite residue
extensions or nondiscrete upper breaks~\cite[Secs.~2.1--2.4]{FesenkoDeeply}.
The examples below have a different role: they show that the same obstruction
can occur inside a single iterated-polynomial inverse branch.  In such a tower,
prime-to-$p$ residue dynamics may produce an infinite unramified part, while the
completion remains governed by deep ramification.

The closest predecessors are the Frobenius-iterate extensions of Cais--Davis
\cite{CaisDavis}, the strictly APF criterion of Cais--Davis--Lubin
\cite{CaisDavisLubin}, and Berger's iterated extensions attached to relative
Lubin--Tate groups~\cite{BergerIterated}.  Those settings impose much stronger
Frobenius, Galois, or formal-group structure, often starting from a uniformizer
and hence carrying extra rigidity.  Here $t_0$ is arbitrary and the
residue dynamics of $\bar f\in k[X^p]$ may include prime-to-$p$ phenomena.  APF
therefore cannot be expected; the theorem isolates the robust conclusion that
survives, namely a perfectoid-or-unramified alternative after completion.

A special case of particular interest is the \emph{Frobenius-type} situation
\[
        f(X)\equiv X^{p^a}\pmod{\mathfrak m_K}.
\]
This includes the kind of iterated towers considered by Berger
\cite{BergerIterated}.  In this case the unramified alternative collapses:
any unramified compatible tower is already contained in the base field.

\section{Basic lemmas}

We use without comment that, since $f$ is monic and $t_0\in\OK$, all $t_n$ are
integral; the same holds after integral translations.

\subsection{Derivative growth}

The derivative hypothesis is inherited by iterates and translated compositions.

\begin{lemma}\label{lem:derivative-growth}
For every $n\geq1$ and every $r\geq1$,
\[
        (f^{\circ n})^{(r)}(X)\in \pi_K^n\OK[X].
\]
More generally, if $\alpha_1,\ldots,\alpha_n$ are integral over an unramified extension of $K$ and
\[
        F_i(X):=f(\alpha_i+X)-f(\alpha_i),
\]
then for
\[
        H_n:=F_1\circ F_2\circ\cdots\circ F_n
\]
one has
\[
        H_n^{(r)}(X)\in \pi_K^n O[X]\qquad(r\geq1),
\]
where $O$ is the ring of integers of any field containing the $\alpha_i$.
\end{lemma}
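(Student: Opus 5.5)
The plan is to reduce everything to the first derivative via the chain rule and then observe that higher derivatives cannot lower the $\pi_K$-adic content of a polynomial. The observation used at the end is this: if $G\in\pi_K^n O[X]$, then $G^{(s)}\in\pi_K^n O[X]$ for every $s\geq0$. Indeed, differentiating $\sum_j g_jX^j$ only multiplies the coefficients by the integers $j(j-1)\cdots(j-s+1)$. So it suffices to show $H_n'(X)\in\pi_K^n O[X]$, and likewise $(f^{\circ n})'(X)\in\pi_K^n\OK[X]$. Then $H_n^{(r)}=(H_n')^{(r-1)}$ for $r\geq1$.

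First I record the integrality of the pieces. Since $f\in\OK[X]$ and each $\alpha_i$ is integral, $F_i(X)=f(\alpha_i+X)-f(\alpha_i)$ lies in $O[X]$. Hence every partial composition $F_{j+1}\circ\cdots\circ F_n$ lies in $O[X]$, and so does every iterate $f^{\circ j}$. Next I note the key consequence of \eqref{eq:derivative-hypothesis}: $F_i'(X)=f'(\alpha_i+X)$. Since $f'\in\mathfrak m_K\OK[X]=\pi_K\OK[X]$, substituting the integral element $\alpha_i+X$ gives $F_i'\in\pi_K O[X]$. More generally, $F_i'(P(X))\in\pi_K O[X]$ for any $P\in O[X]$. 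The same holds for $f'(P(X))$ with $P\in\OK[X]$.

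Now the chain rule gives
\[
H_n'(X)=\prod_{j=1}^{n}F_j'\bigl((F_{j+1}\circ\cdots\circ F_n)(X)\bigr),
\]
with the empty composition for $j=n$ being the identity. Each of the $n$ factors lies in $\pi_K O[X]$ by the previous paragraph, so the product lies in $\pi_K^nO[X]$. The same argument gives $(f^{\circ n})'(X)=\prod_{j=0}^{n-1}f'(f^{\circ j}(X))\in\pi_K^n\OK[X]$. This settles the first assertion independently; alternatively, it is the case $\alpha_i=0$, since $f^{\circ n}$ then differs from the corresponding composite of translated maps only by additive constants, which do not affect derivatives. Combined with the observation in the first paragraph, this yields all $r\geq1$.

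There is no real obstacle; the only care needed is with the notion of $r$-th derivative. One must use ordinary derivatives, not divided ones, so that passing from $H_n'$ to $H_n^{(r)}$ only multiplies coefficients by integers. One must also check that $\pi_K^nO$ makes sense for an arbitrary, possibly ramified, $O$. This is harmless because $\pi_K\in O$ and $\mathfrak m_K O\subseteq\pi_K O$.
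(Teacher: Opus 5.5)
Your proof is correct and follows the paper's argument. The chain rule writes $H_n'$ (and $(f^{\circ n})'$) as a product of $n$ values of $f'$ at integral polynomials, and higher ordinary derivatives only multiply coefficients by integers.

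One aside is wrong, though you never rely on it, since you prove the first assertion directly. Taking $\alpha_i=0$ does not recover the first assertion: for $f=X^p+c$ with $c\neq0$ you get $F_1\circ F_2=X^{p^2}$, which is not $f^{\circ 2}$ up to an additive constant. The correct choice is $\alpha_i=f^{\circ(n-i)}(0)$, which gives $H_n=f^{\circ n}-f^{\circ n}(0)$.
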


\begin{proof}
For $r=1$, the chain rule gives a product of $n$ values of $f'$ at integral
polynomials, hence a factor $\pi_K^n$.  Higher derivatives are obtained by
formal differentiation, which cannot decrease $p$-adic valuation.
\end{proof}

\subsection{A valuation estimate for inverse branches}

The following estimate is uniform under integral translations of $f$.

\begin{lemma}\label{lem:contraction}
Let $L/K$ be an algebraic extension with valuation still denoted $v_K$ and normalized by $v_K(\pi_K)=1$.  Let
\[
        F(X)=\sum_{i=1}^d c_iX^i\in\mathcal O_L[X]
\]
satisfy $F(0)=0$ and $F'(X)\in\pi_K\mathcal O_L[X]$.  Let $x\in\mathfrak m_L$ and put $y=F(x)\neq0$.  Then
\[
        v_K(x)\leq \psi\bigl(v_K(y)\bigr),
        \qquad
        \psi(s):=\max\{s-1,s/2\}.
\]
Consequently, if
\[
        y_j=F_j(y_{j+1})\qquad(j=0,\ldots,r-1)
\]
inside a common algebraic extension of $K$, with each $F_j$ satisfying the same
hypotheses, with all $y_j$ in its maximal ideal, and with $y_0\neq0$, then
\[
        v_K(y_r)\leq \psi^{\circ r}\bigl(v_K(y_0)\bigr).
\]

\end{lemma}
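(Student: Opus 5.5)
We split $F(x)=c_1x+\sum_{i\geq2}c_ix^i$ and bound the linear and the higher-order parts separately. The hypothesis $F'(X)\in\pi_K\mathcal O_L[X]$ says $ic_i\in\pi_K\mathcal O_L$ for all $i$. For $i=1$ this gives $v_K(c_1)\geq1$, so $v_K(c_1x)\geq v_K(x)+1$. For $i\geq2$ we only use integrality, $v_K(c_i)\geq0$, so $v_K(c_ix^i)\geq i\,v_K(x)\geq 2v_K(x)$, since $v_K(x)>0$. By the ultrametric inequality,
\[
        v_K(y)=v_K(F(x))\geq\min\{v_K(x)+1,\,2v_K(x)\}.
\]
Here we put $s=v_K(y)$, which is finite because $y\neq0$.

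Next we invert this inequality. If $v_K(x)\geq1$, then $2v_K(x)\geq v_K(x)+1$, so the minimum is $v_K(x)+1$ and $v_K(x)\leq s-1\leq\psi(s)$. If $v_K(x)<1$, then the minimum is $2v_K(x)$ and $v_K(x)\leq s/2\leq\psi(s)$. In both cases $v_K(x)\leq\psi(s)$. (One could sharpen this using $ic_i\in\pi_K\mathcal O_L$ at indices $i$ prime to $p$, but it is not needed.) We also note that $x\neq 0$, because $F(0)=0$ while $y\neq0$.

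For the iterated statement we argue by induction on $r$, using two elementary properties of $\psi$. First, $\psi$ is nondecreasing. Second, $\psi(s)<s$ for every $s>0$; this is irrelevant to the bound but shows that the sequence of valuations decreases. Since $y_0\neq0$ and $y_0=F_0(y_1)$ with $F_0(0)=0$, we get $y_1\neq0$, and inductively every $y_j\neq0$. Applying the single-step estimate to $y_j=F_j(y_{j+1})$, with $y_{j+1}$ in the maximal ideal, gives $v_K(y_{j+1})\leq\psi(v_K(y_j))$. Monotonicity of $\psi$ then lets us compose these inequalities, giving $v_K(y_r)\leq\psi^{\circ r}(v_K(y_0))$.

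The only point that needs care is that all $y_j$ lie in a common algebraic extension $L'$. Each $F_j$ has coefficients in the ring of integers of $L'$ (possibly after enlarging the field), and the normalization $v_K(\pi_K)=1$ is preserved by the unique extension of the valuation. The single-step estimate therefore applies verbatim in $L'$. Nothing here is a genuine obstacle: the main step is the two-case inversion of the minimum, and the rest is monotonicity.
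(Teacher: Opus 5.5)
Your proof is correct and is essentially the paper's argument: the ultrametric inequality with $v_K(c_1)\ge 1$ and $v_K(c_ix^i)\ge 2v_K(x)$ for $i\ge 2$, a two-case inversion (you split on whether $v_K(x)\ge 1$, the paper splits on which index attains the minimum), and then iteration using that nonvanishing propagates along the chain. Your explicit remark that $\psi$ is nondecreasing is exactly what the paper's ``step by step'' iteration uses tacitly.
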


\begin{proof}
Write $s=v_K(x)>0$ and $T=v_K(y)$.  By the ultrametric inequality,
\[
        T=v_K(y)\geq
        \min_{c_i\neq0} v_K(c_ix^i).
\]
Thus for some $i$ with $c_i\neq0$,
\[
        v_K(c_i)+is\leq T.
\]
If $i=1$, then $c_1$ is divisible by $\pi_K$, because $F'(X)\in\pi_K\mathcal O_L[X]$; hence
\[
        1+s\leq T,
        \qquad\text{so}\qquad
        s\leq T-1.
\]
If $i\geq2$, then $v_K(c_i)\geq0$, and therefore
\[
        2s\leq is\leq T,
        \qquad\text{so}\qquad
        s\leq T/2.
\]
Combining the two cases gives
\[
        s\leq\max\{T-1,T/2\}=\psi(T).
\]
The iterated estimate follows step by step; nonvanishing propagates backwards
from $y_0\neq0$.  
\end{proof}

\begin{remark}\label{rem:psi}
For the function
\[
\psi(s)=\max\{s-1,s/2\},
\]
one has $\psi(s)<s$ for every $s>0$. Furthermore,
\[
\psi^{\circ r}(C)\longrightarrow0
\qquad(r\to\infty)
\]
for every $C>0$. Indeed, while $s>2$, each iteration decreases the value by at least $1$, and once $s\le2$, each iteration halves it.
\end{remark}

\subsection{Separation from the unramified part}

The following lemma is probably well known. Since we could not find a suitable reference, we include a short proof. It shows that an element generating a nontrivial totally ramified extension cannot be approximated arbitrarily well by elements of unramified extensions.

\begin{lemma}\label{lem:separation}
Let $E/K$ be a finite unramified extension and let $L/E$ be a finite totally ramified extension.  Let $a\in L\setminus E$. Then there is a constant $C(a)>0$ such that for every
finite unramified extension $E'/E$ in $K^{\rm ur}$ and every $b\in E'$, one has
\[
        v_K(a-b)\leq C(a).
\]
\end{lemma}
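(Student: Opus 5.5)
The plan is to use the fact that $L$ and $E'$ are linearly disjoint over $E$. Since $L/E$ is totally ramified and $E'/E$ is unramified, $L\cap E'=E$. The compositum $LE'$ is then totally ramified of degree $e:=[L:E]$ over $E'$. Moreover, $LE'=E'\otimes_E L$, and every $E$-embedding of $L$ into $\overline K$ extends to an $E'$-embedding of $LE'$.

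The key quantity is a separation constant. Let $\sigma_1=\mathrm{id},\sigma_2,\dots,\sigma_e$ be the $E$-embeddings of $L$ into $\overline K$. Since $a\notin E$ and $L/E$ is separable (characteristic zero), $a$ has some conjugate different from itself. Put
\[
  C(a):=\max_{\sigma_i(a)\neq a} v_K\bigl(a-\sigma_i(a)\bigr),
\]
which is a finite positive number, or at worst finite, so we may replace it by the maximum with $1$. The valuation is unique on $\overline K$, so conjugates are comparable.

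The proof will take the following steps.

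1. Given $E'$ and $b\in E'$, choose $\sigma_i$ with $\sigma_i(a)\neq a$. Extend it to an automorphism $\tau$ of $\overline K$ fixing $E'$. This is possible because $\sigma_i$ extends to an $E'$-embedding of $LE'$, by the linear disjointness above, and then to $\overline K$.

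2. Since $\tau$ fixes $b$, we get $\tau(a)-b=\tau(a-b)$. Because $\tau$ preserves $v_K$ (uniqueness of the extension of the valuation from the complete field $K$), this gives $v_K(\tau(a)-b)=v_K(a-b)$.

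3. The ultrametric inequality then yields
\[
  v_K\bigl(a-\tau(a)\bigr)\ \ge\ \min\{v_K(a-b),\,v_K(\tau(a)-b)\}=v_K(a-b).
\]
Hence $v_K(a-b)\le v_K(a-\sigma_i(a))\le C(a)$.

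The only delicate point is step 1, namely that the nontrivial conjugate $\sigma_i(a)$ is still a conjugate over $E'$. I would verify it by degree counting. We have $[LE':E']\le[L:E]=e$, while the ramification index of $LE'/E'$ is at least that of $LE'/E$ divided by $e(E'/E)=1$, so it is at least $e$. Hence $[LE':E']=e$. The minimal polynomial of a primitive element of $L/E$ therefore stays irreducible over $E'$, and all $E$-embeddings lift.

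Alternatively, one could compare the minimal polynomials of $a$ over $E$ and over $E'$ directly, but the embedding-extension argument above is the cleanest route.
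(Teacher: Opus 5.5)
Your proof is correct, and it differs from the paper's in how the key step is carried out.

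\textbf{The paper's route.} The paper uses the same constant $C(a)$, namely a bound on the values $v_K(a-\sigma(a))$ over the nontrivial $E$-conjugates of $a$. It then applies Krasner's lemma as a black box: if $v_K(a-b)>C(a)$, then $E(a)\subset E(b)\subset E'$. This is impossible, because $E(a)/E$ is nontrivially totally ramified while $E'/E$ is unramified.

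\textbf{Your route.} You inline the proof of Krasner's lemma, using Galois-invariance of $v_K$ and the ultrametric inequality. You replace the final ramification contradiction by the degree count $[LE':E']=[L:E]$, which shows that every $E$-conjugate of $a$ is still an $E'$-conjugate.

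\textbf{What each approach buys.} The lifting step is more than the stated bound needs. Since $L\cap E'=E$, you already have $a\notin E'$. So some $\tau\in\operatorname{Gal}(\overline K/E')$ moves $a$, and $\tau(a)$ is automatically a nontrivial $E$-conjugate of $a$. Your step 3 then gives $v_K(a-b)\le C(a)$ immediately, with no appeal to linear disjointness. In exchange, your stronger lifting lets you run step 3 for every $\sigma_i$ with $\sigma_i(a)\neq a$. This gives the sharper bound $v_K(a-b)\le\min_{\sigma_i(a)\neq a}v_K(a-\sigma_i(a))$, whereas the paper's Krasner argument only yields a constant exceeding the maximum of these distances.
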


\begin{proof}
Put $F=E(a)$; then $F/E$ is nontrivially totally ramified and
$F\cap K^{\rm ur}=E$.  Choose $C(a)>0$ larger than all numbers
$v_K(a-\sigma(a))$, where $\sigma$ runs through the $E$-embeddings of $F$ in
$\overline K$ with $\sigma(a)\neq a$.  If, for some $b\in E'\subset K^{\rm ur}$, one had
\[
v_K(a-b)>C(a),
\]
then Krasner's lemma would imply
\[
E(a)\subset E(b)\subset E',
\]
a contradiction, since $E(a)/E$ is nontrivially totally ramified whereas
$E'/E$ is unramified. Hence $v_K(a-b)\leq C(a)$ for all such $b$.
\end{proof}

\subsection{Translated tails}
We now return to the tower
\[
        K_n=K(t_n),\qquad
        E_n=K_n\cap K^{\rm ur},\qquad
        e_n=[K_n:E_n].
\]
The next lemma provides the key reduction after ramification first appears. If
$e_m>1$, then, for every later level $N>m$, one can subtract suitable residue
lifts from the unramified field $E_N$ so that the tail
\[
        t_m,t_{m+1},\ldots,t_N
\]
is transformed into a positive-valuation inverse problem to which
Lemma~\ref{lem:contraction} applies.
\begin{lemma}\label{lem:translated-tail}
Assume that $e_m>1$.  For every $N>m$, put $r=N-m$.  Then there are elements
\[
        \alpha_0,\ldots,\alpha_r\in \mathcal O_{E_N}
        \qquad\text{and}\qquad
        u_0,\ldots,u_r\in K_N
\]
with the following properties:
\[
        u_j=t_{m+j}-\alpha_j\in\mathfrak m_{K_N}
        \qquad(0\leq j\leq r),
\]
\[
        u_j=F_j(u_{j+1}),\qquad
        F_j(X):=f(X + \alpha_{j+1})-f(\alpha_{j+1})
        \qquad(0\leq j<r),
\]
and
\[
        F_j(0)=0,\qquad F_j'(X)\in \pi_K\mathcal O_{E_N}[X].
\]
Moreover, there is a constant $C_m>0$, independent of $N$, such that
\[
        0<v_K(u_0)\leq C_m,
\]
and one also has $u_r\neq0$.
\end{lemma}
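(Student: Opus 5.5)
The plan is to build the $\alpha_j$ backwards from level $N$, using that $K_N/E_N$ is totally ramified, so the two fields have the same residue field. First I record that $K_m\subset K_N$, since $t_m=f^{\circ r}(t_N)\in K(t_N)$. Hence $E_m=K_m\cap K^{\rm ur}\subset K_N\cap K^{\rm ur}=E_N$.

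\emph{Choice of lifts.} Because $K_N/E_N$ is totally ramified, the residue field of $K_N$ equals that of $E_N$. Since $t_N\in\mathcal O_{K_N}$, I can choose $\alpha_r\in\mathcal O_{E_N}$ with $\alpha_r\equiv t_N$ modulo $\mathfrak m_{K_N}$. I then define the remaining lifts by exact forward iteration,
\[
\alpha_j:=f(\alpha_{j+1})\qquad(j=r-1,\ldots,0),
\]
so that all $\alpha_j\in\mathcal O_{E_N}$. Since $f$ has integral coefficients, reduction commutes with $f$. Descending induction from $\alpha_r\equiv t_N$ then gives $\alpha_j\equiv f(t_{m+j+1})=t_{m+j}$ modulo $\mathfrak m_{K_N}$. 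Hence each $u_j:=t_{m+j}-\alpha_j$ lies in $\mathfrak m_{K_N}$.

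\emph{The recursion.} The exactness $f(\alpha_{j+1})=\alpha_j$ is the point of this construction, because it lets me write
\[
u_j=f(t_{m+j+1})-f(\alpha_{j+1})=f(u_{j+1}+\alpha_{j+1})-f(\alpha_{j+1})=F_j(u_{j+1}).
\]
Clearly $F_j(0)=0$. Moreover $F_j'(X)=f'(X+\alpha_{j+1})$, which lies in $\pi_K\mathcal O_{E_N}[X]$ by the hypothesis $f'\in\mathfrak m_K\OK[X]$, because $\alpha_{j+1}$ is integral and $\mathfrak m_K=\pi_K\OK$.

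\emph{The bound on $u_0$.} This is the step that needs an actual input, namely Lemma~\ref{lem:separation}. Since $e_m>1$ and $K_m=E_m(t_m)$, we have $t_m\notin E_m$. Apply Lemma~\ref{lem:separation} with $E=E_m$, $L=K_m$, $a=t_m$, $E'=E_N$ (unramified over $E_m$) and $b=\alpha_0$. It gives $v_K(u_0)=v_K(t_m-\alpha_0)\le C(t_m)=:C_m$. This constant depends only on $t_m$, not on $N$.

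For positivity I argue separately. First, $u_0\in\mathfrak m_{K_N}$. Second, $u_0\neq0$: otherwise $t_m=\alpha_0\in E_N\cap K_m\subset K^{\rm ur}\cap K_m=E_m$, which is a contradiction. Together these give $0<v_K(u_0)\le C_m$.

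\emph{Nonvanishing of $u_r$.} If $u_r=0$, then the recursion and $F_j(0)=0$ force $u_{r-1}=\cdots=u_0=0$, contradicting $u_0\neq0$. Hence $u_r\neq0$.

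I expect the only delicate point to be making the recursion exact rather than merely congruent. Choosing independent residue lifts $\alpha_j$ would leave error terms $f(\alpha_{j+1})-\alpha_j$, which would spoil $F_j(0)=0$. Defining the lifts by forward iteration from the single lift $\alpha_r$ avoids this.
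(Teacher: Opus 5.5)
Your proof is correct and follows essentially the same route as the paper: you take a single residue lift $\alpha_r\in\mathcal O_{E_N}$ and propagate it by $\alpha_j=f(\alpha_{j+1})$, which makes the recursion exact, and you then apply Lemma~\ref{lem:separation} with $E=E_m$, $E'=E_N$, $a=t_m$ to get the bound on $u_0$ uniformly in $N$. Your explicit check that $u_0\neq0$ (via $t_m\notin E_m$) spells out a step the paper leaves implicit in the finiteness of the separation bound.
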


\begin{proof}
Since $K_N/E_N$ is totally ramified, $K_N$ and $E_N$ have the same residue
field.  Choose $\alpha_r\in \mathcal O_{E_N}$ lifting the residue class of $t_N$, and
define
\[
        \alpha_j:=f(\alpha_{j+1})\qquad(j=r-1,\ldots,0).
\]
Then $\alpha_j$ and $t_{m+j}$ have the same residue, and the displayed
relations follow immediately from the definition of
$u_j:=t_{m+j}-\alpha_j$.  The derivative condition follows from
\eqref{eq:derivative-hypothesis}.

It remains to bound $u_0$. Since $e_m>1$, the extension $K_m/E_m$ is nontrivially totally ramified, $t_m\notin E_m$, the extension $E_N/E_m$ is unramified, and $\alpha_0\in E_N$, Lemma~\ref{lem:separation} gives
\[
        v_K(u_0)=v_K(t_m-\alpha_0)\le C_m
\]
for all $N$. Since $u_0\in\mathfrak m_{K_N}$, we obtain
\[
        0<v_K(u_0)\le C_m.
\]

Finally, if $u_r=0$, then the relations
$u_j=F_j(u_{j+1})$ and $F_j(0)=0$ force $u_0=0$, a contradiction.
\end{proof}

\section{First ramification forces unbounded ramification}

The first step toward the proof of Theorem~\ref{thm:main} is to show that, once
ramification appears, the ramification indices cannot remain bounded. The
argument combines the translated-tail construction of
Lemma~\ref{lem:translated-tail} with the valuation estimate of
Lemma~\ref{lem:contraction}.

\begin{proposition}\label{prop:unbounded-e}
If the tower is not unramified, then the ramification indices
\[
e_n=[K_n:E_n]
\]
are unbounded.
\end{proposition}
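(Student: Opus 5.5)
Since the tower is not unramified, we fix an index $m$ with $e_m>1$ and argue by contradiction, assuming $e_n\le B$ for all $n$. The plan compares two bounds on the valuation of the translated tail at level $N$. On one side, $u_r$ is a nonzero element of the maximal ideal of $K_N$. On the other, Lemma~\ref{lem:contraction} pushes $v_K(u_r)$ towards $0$ as $r=N-m$ grows. The mismatch between these two bounds will give the contradiction.

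Fix $N>m$ and apply Lemma~\ref{lem:translated-tail}. This produces $\alpha_j\in\mathcal O_{E_N}$ and $u_j=t_{m+j}-\alpha_j\in\mathfrak m_{K_N}$, related by $u_j=F_j(u_{j+1})$. Here $F_j(0)=0$ and $F_j'\in\pi_K\mathcal O_{E_N}[X]$, and moreover $0<v_K(u_0)\le C_m$ and $u_r\neq0$. All $u_j$ lie in $\mathfrak m_{K_N}$, and $u_0\ne0$, so the iterated form of Lemma~\ref{lem:contraction}, applied with $L=K_N$, gives
\[
v_K(u_r)\le \psi^{\circ r}\bigl(v_K(u_0)\bigr)\le \psi^{\circ r}(C_m).
\]
The second inequality uses that $\psi$ is nondecreasing, which is clear because it is a maximum of two increasing functions.

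For the lower bound, $u_r$ is a nonzero element of $\mathfrak m_{K_N}$. The absolute ramification index of $K_N$ over $K$ equals $e_N=[K_N:E_N]$, because $E_N/K$ is unramified and $K_N/E_N$ is totally ramified. Hence the value group of $K_N$ under our normalization is $\frac1{e_N}\mathbb Z$, and
\[
v_K(u_r)\ge \frac1{e_N}\ge\frac1B.
\]
By Remark~\ref{rem:psi}, $\psi^{\circ r}(C_m)\to0$ as $r\to\infty$, and $C_m$ does not depend on $N$. So for $N$ large we get $\psi^{\circ(N-m)}(C_m)<1/B$, which contradicts the lower bound. Therefore the $e_n$ are unbounded.

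The main obstacle is the uniformity in $N$. The constant $C_m$ must not depend on $N$, and this is exactly what Lemma~\ref{lem:separation} supplies through Lemma~\ref{lem:translated-tail}, since every $E_N$ is unramified over $E_m$. In addition, all $u_j$ must sit in a single field whose value group is controlled by $e_N$. Both points are already handled by the earlier lemmas, so the remaining work is the discreteness estimate $v_K(u_r)\ge 1/e_N$ and checking that $\psi$ is monotone. The same argument in fact yields the stronger statement $e_N\ge 1/\psi^{\circ(N-m)}(C_m)\to\infty$, so $e_n\to\infty$; this matches the abstract's claim that the ramification indices tend to infinity.
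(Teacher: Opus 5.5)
Your proof is correct and follows the paper's argument. It uses the translated tail from Lemma~\ref{lem:translated-tail} with the $N$-independent constant $C_m$, the contraction bound $v_K(u_r)\le\psi^{\circ r}(C_m)$, and the discreteness bound $v_K(u_r)\ge 1/e_N$. The only difference is cosmetic: the paper first uses monotonicity of $(e_n)$ to make $e_N$ eventually constant, whereas you work directly with a uniform bound $B$. You also make explicit that $\psi$ is monotone and note the quantitative bound $e_N\ge 1/\psi^{\circ(N-m)}(C_m)$, which is slightly cleaner.
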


\begin{proof}
Assume, for contradiction, that the sequence $(e_n)$ is bounded.  Since
$K_n\subset K_{n+1}$, ramification indices multiply in the tower; hence
$(e_n)$ is nondecreasing.  Thus there are an index $m$ and an integer $e$ such
that
\[
        e_N=e_m=e\qquad(N\geq m).
\]
As the tower is not unramified, we choose $m$ after the first ramified level, so
$e>1$.  Then $K_N/K_m$ has ramification index $1$ for every $N\geq m$, and
therefore $K_N/K_m$ is unramified.

Fix $N>m$ and write $r=N-m$.  Apply Lemma~\ref{lem:translated-tail} at the
level $m$.  It gives a translated tail
\[
        u_j=F_j(u_{j+1})\qquad(0\leq j<r)
\]
with all $u_j$ in the maximal ideal, with $u_r\neq0$, and with
\[
        0<v_K(u_0)\leq C_m
\]
for a constant $C_m$ independent of $N$.
Applying Lemma~\ref{lem:contraction} to the chain above gives
\[
        v_K(u_r)\leq \psi^{\circ r}\bigl(v_K(u_0)\bigr)
        \leq \psi^{\circ r}(C_m).
\]
But $u_r\in K_N$ has positive valuation, and $e_N=e$.  Hence
\[
        \frac1e\leq v_K(u_r)\leq \psi^{\circ r}(C_m).
\]
The right hand side tends to zero as $r\to\infty$, a contradiction.
\end{proof}

\section{Different growth and proof of the main theorem}

We now turn from ramification indices to differents.  We shall use the standard
criterion for deep ramification \cite{CoatesGreenberg,FesenkoDeeply}: if
$L/K$ is algebraic and
\[
        K=L_0\subset L_1\subset L_2\subset\cdots\subset L
\]
is a cofinal tower of finite subextensions, then $L/K$ is deeply ramified if
and only if the normalized valuations $v_K(\D_{L_i/K})$ are unbounded.  Thus,
for our tower, it remains in the non-unramified case to prove that the
valuations $v_K(\D_{K_N/K})$ are unbounded.

Assume, until the proof of Theorem~\ref{thm:main}, that the tower is not
unramified.  By Proposition~\ref{prop:unbounded-e}, $e_N\to\infty$.  Let
$n_0$ be the first index for which $e_{n_0}>1$, and put
\[
        E_0:=E_{n_0},\qquad e_*:=e_{n_0}.
\]

For $N>n_0$, write $r=N-n_0$.  Apply Lemma~\ref{lem:translated-tail} at the
first ramified level $n_0$ and keep its notation.  Thus
$u_j\in\mathfrak m_{K_N}$,
\begin{equation}\label{eq:translated-chain}
        u_j=F_j(u_{j+1}),\qquad
        F_j(X):=f(X + \alpha_{j+1})-f(\alpha_{j+1}),
\end{equation}
where $F_j(0)=0$ and
\[
        F_j'(X)=f'(X+\alpha_{j+1})\in\pi_K\mathcal O_{E_N}[X].
\]
Moreover, with $C_0:=C_{n_0}$, we have
\begin{equation}\label{eq:u0-bound}
        0<v_K(u_0)\leq C_0,
\end{equation}
and $u_r\neq0$. We also put
\[
        B_N:=K_{n_0}E_N.
\]
Then $B_N/K$ has ramification index $e_*$, independent of $N$, and $K_N/B_N$
is totally ramified of degree
\[
        m_N:=e_N/e_*\longrightarrow\infty.
\]

We shall use the following elementary property of the Gauss valuation. It
allows one to recover divisibility of the coefficients of a polynomial from the
valuation of its value at a uniformizer of a totally ramified extension.

For a nonarchimedean local field \(B\) with normalized valuation \(v_B\),
recall that the Gauss valuation on \(\mathcal O_B[X]\) is defined by
\[
        v_{B,G}\!\left(\sum_i b_iX^i\right):=\min_i v_B(b_i),
\]
with the convention that \(v_{B,G}(0)=+\infty\).

\begin{lemma}\label{lem:gauss-general}
Let $B$ be a nonarchimedean local field, let $M/B$ be totally ramified of degree
$m$, and let $\pi_M$ be a uniformizer of $M$. If $h\in \mathcal O_B[X]$ has degree $<m$
and
\[
        v_B(h(\pi_M))\ge \lambda
\]
for an integer $\lambda$, then
\[
        v_{B,G}(h)\ge \lambda.
\]
\end{lemma}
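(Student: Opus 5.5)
The plan is to write $h=\sum_{i<m} b_iX^i$ with $b_i\in\mathcal O_B$ and use that the terms $b_i\pi_M^i$ have pairwise distinct valuations. We work with the valuation $v_M$ normalized by $v_M(\pi_M)=1$. Since $M/B$ is totally ramified of degree $m$, we have $v_M=m\,v_B$ on $B$. So for each $i$ with $b_i\neq0$,
\[
v_M(b_i\pi_M^i)=m\,v_B(b_i)+i .
\]
As $0\le i<m$, these integers have pairwise distinct residues modulo $m$. In particular they are pairwise distinct. This is the key step, and it is exactly where the hypothesis $\deg h<m$ enters.

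By the strict ultrametric equality for a sum of terms with distinct valuations, we get
\[
v_M(h(\pi_M))=\min_{i:\,b_i\neq0}\bigl(m\,v_B(b_i)+i\bigr).
\]
The hypothesis $v_B(h(\pi_M))\ge\lambda$, where $v_B$ is extended to $M$ as $v_M/m$, reads $v_M(h(\pi_M))\ge m\lambda$. Hence $m\,v_B(b_i)+i\ge m\lambda$ for every $i$ with $b_i\neq0$. This gives $v_B(b_i)\ge \lambda-i/m>\lambda-1$.

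Since $v_B(b_i)$ and $\lambda$ are integers, it follows that $v_B(b_i)\ge\lambda$ for all $i$. That is, $v_{B,G}(h)\ge\lambda$. The case $h=0$ is trivial by the convention $v_{B,G}(0)=+\infty$.

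The only point requiring care is this last integrality step, which uses that $\lambda$ is an integer and that $i/m<1$. I do not expect any real obstacle. The argument amounts to noting that $1,\pi_M,\dots,\pi_M^{m-1}$ form an orthogonal basis for the valuation, a standard fact for totally ramified extensions.
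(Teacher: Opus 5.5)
Your proof is correct and follows the paper's argument. Both show that the terms $b_i\pi_M^i$ with $0\le i<m$ have pairwise distinct valuations: you use the residues of $m\,v_B(b_i)+i$ modulo $m$, the paper uses the classes of $i/m$ modulo the value group of $B$. So $v_B(h(\pi_M))$ equals the minimum, and both conclude from $v_B(b_i)\ge\lambda-i/m$ with $v_B(b_i),\lambda\in\mathbb Z$ and $i/m<1$.
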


\begin{proof}
Write $h=\sum_{i=0}^{m-1}b_iX^i$.  The terms $b_i\pi_M^i$ have distinct
valuations modulo the value group of $B$, so no cancellation can occur among
terms of minimal valuation.  Hence
\[
        v_B(h(\pi_M))
        =\min_{b_i\neq0}\left(v_B(b_i)+\frac{i}{m}\right).
\]
If this minimum is at least the integer $\lambda$, then
$v_B(b_i)+i/m\geq\lambda$ for each nonzero $b_i$.  Since
$v_B(b_i)\in\mathbb Z$ and $0\leq i/m<1$, each $v_B(b_i)\geq\lambda$.
\end{proof}

Choose a uniformizer $\rho_N$ of $K_N/B_N$, and let
\[
        \Phi_N(X)\in \mathcal O_{B_N}[X]
\]
be its Eisenstein minimal polynomial over $B_N$.  Since
$\mathcal O_{K_N}=\mathcal O_{B_N}[\rho_N]$, there is a polynomial
\[
        P_N(X)\in \mathcal O_{B_N}[X],\qquad \deg P_N<m_N,
\]
such that
\[
        P_N(\rho_N)=u_r.
\]
Because $u_r$ has positive valuation and $\rho_N$ has zero residue,
\begin{equation}\label{eq:PN-zero}
        P_N(0)\in\mathfrak m_{B_N}.
\end{equation}

Let
\[
        H_N:=F_0\circ F_1\circ\cdots\circ F_{r-1}.
\]
Then $H_N(u_r)=u_0$.  Put
\[
        Q_N(X):=H_N(P_N(X))-u_0.
\]
Repeated Euclidean division by the monic polynomial $\Phi_N$ gives a finite
expansion
\[
        Q_N(X)=\sum_{j\geq0}\Phi_N(X)^j g_{N,j}(X),
        \qquad g_{N,j}\in \mathcal O_{B_N}[X],\quad \deg g_{N,j}<m_N .
\]
Evaluating at $\rho_N$ gives $g_{N,0}(\rho_N)=0$.  Since
$\deg g_{N,0}<m_N=[K_N:B_N]$, we have $g_{N,0}=0$.  Thus
\begin{equation}\label{eq:phi-expansion}
        Q_N(X)=\sum_{j\geq1}\Phi_N(X)^j g_{N,j}(X).
\end{equation}

Bounded differents force the coefficients in \eqref{eq:phi-expansion} to become
highly divisible.

\begin{lemma}\label{lem:growth-g}
Assume that the valuations $v_K(\D_{K_N/K})$ are bounded.  Then, for every fixed
$j\geq1$, there exists a constant $R_j$ such that
\[
        v_{B_N,G}(g_{N,j})\geq r-R_j
\]
for all $N>n_0$.
\end{lemma}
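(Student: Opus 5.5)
The plan is to read off the $g_{N,j}$ one at a time by differentiating the expansion \eqref{eq:phi-expansion} $j$ times and evaluating at $\rho_N$. The argument is an induction on $j$ with three inputs: Lemma~\ref{lem:derivative-growth} bounds the left side, the bounded different controls $\Phi_N'(\rho_N)$, and Lemma~\ref{lem:gauss-general} converts a bound on $g_{N,j}(\rho_N)$ into a Gauss-valuation bound.

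\emph{Left side.} Since $Q_N=H_N\circ P_N-u_0$ with $P_N\in\mathcal O_{B_N}[X]$, the Fa\`a di Bruno formula expresses $Q_N^{(s)}$ for $s\ge1$ as an integral combination of products of the form $H_N^{(k)}(P_N(X))\cdot\prod P_N^{(\ell_i)}(X)$ with $k\ge1$. The Fa\`a di Bruno coefficients are integers, so no denominators appear. The $\alpha_i$ lie in $\mathcal O_{E_N}$, so Lemma~\ref{lem:derivative-growth} applies and gives $H_N^{(k)}\in\pi_K^r\mathcal O[X]$. Hence
\[
v_K\bigl(Q_N^{(s)}(\rho_N)\bigr)\ge r\qquad(s\ge1).
\]

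\emph{Right side.} Differentiate $\sum_{i\ge1}\Phi_N^i g_{N,i}$ exactly $j$ times and set $X=\rho_N$. Since $\Phi_N(\rho_N)=0$, only terms in which every factor $\Phi_N$ has been differentiated survive. For $i>j$ every term contains an undifferentiated $\Phi_N$, so it vanishes. For $i=j$ the only survivor is
\[
j!\,\Phi_N'(\rho_N)^j\,g_{N,j}(\rho_N).
\]
For $i<j$ the survivors are integral combinations of the quantities $\Phi_N^{(a)}(\rho_N)$, all of which are integral, times derivatives $g_{N,i}^{(b)}(\rho_N)$. Formal differentiation does not lower the Gauss valuation, so by the induction hypothesis these terms have $v_K\ge (r-R_i)/e_*-c$ for a harmless constant $c$ coming from the normalization of $v_{B_N}$ versus $v_K$. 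Note that $v_{B_N}=e_*v_K$, independently of $N$.

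\emph{The step that uses the bounded different.} Since $\mathcal O_{K_N}=\mathcal O_{B_N}[\rho_N]$, we have $\D_{K_N/B_N}=(\Phi_N'(\rho_N))$. Transitivity of differents gives
\[
v_K\bigl(\Phi_N'(\rho_N)\bigr)\le v_K(\D_{K_N/K})\le D
\]
for the assumed uniform bound $D$. Also $v_K(j!)$ is a constant for fixed $j$.

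Solving the differentiated identity for $g_{N,j}(\rho_N)$ then gives
\[
v_K\bigl(g_{N,j}(\rho_N)\bigr)\ge r-\max_{i<j}R_i'-jD-v_K(j!),
\]
where $R_i'$ absorbs the earlier constants. For $j=1$ the sum over $i<j$ is empty and the estimate is direct. Rescaling to $v_{B_N}$ and rounding down to an integer $\lambda$, Lemma~\ref{lem:gauss-general} applies because $\deg g_{N,j}<m_N$. It yields $v_{B_N,G}(g_{N,j})\ge r-R_j$, with $R_j$ depending only on $j$, $D$, $e_*$ and the earlier $R_i$, and not on $N$.

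I expect the main obstacle to be the bookkeeping in the general Leibniz expansion of $(\Phi_N^i g_{N,i})^{(j)}$. One must confirm that every surviving term with $i<j$ really carries a derivative of some $g_{N,i}$ with $i<j$, so that the induction closes. One must also track the $v_K$ versus $v_{B_N}$ normalization, which multiplies $r$ by the fixed factor $e_*$, and keep the resulting constants uniform in $N$.
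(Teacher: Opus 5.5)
Your proposal is correct and is essentially the paper's own proof. It differentiates the $\Phi_N$-adic expansion $j$ times at $\rho_N$ and bounds the left side with Lemma~\ref{lem:derivative-growth}. It controls $\Phi_N'(\rho_N)$ by transitivity of differents (you use only that $\D_{B_N/K}$ is integral, which is slightly leaner than the paper's bound on it), and it concludes with Lemma~\ref{lem:gauss-general}.

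There is one normalization slip. The induction hypothesis gives the lower terms only $v_K\ge (r-R_i)/e_*$, so your displayed bound $v_K(g_{N,j}(\rho_N))\ge r-\cdots$ does not follow when $e_*>1$. To fix it, work throughout with $v_{B_N}=e_*v_K$, as the paper does; there the left side has valuation $\ge e_*r\ge r$, and the induction closes.
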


\begin{proof}

Let \(v_N\) denote the valuation on \(K_N\) extending the normalized valuation
\(v_{B_N}\) of \(B_N\). Since $B_N=K_{n_0}E_N$
and \(E_N/E_0\) is unramified, the extension \(B_N/E_N\) is obtained from the
fixed totally ramified extension \(K_{n_0}/E_0\) by unramified base change. In
particular, its ramification index is \(e_*\), independently of \(N\). Since
\(E_N/K\) is unramified, \(v_{E_N}=v_K\), and hence
\[
        v_N=e_*\,v_K.
\]

Moreover, the different \(\mathfrak D_{B_N/E_N}\) is obtained by unramified
base change from the fixed different \(\mathfrak D_{K_{n_0}/E_0}\), while
\(\mathfrak D_{E_N/K}=\mathcal O_{E_N}\). Therefore
\(v_N(\mathfrak D_{B_N/K})\) is bounded independently of \(N\). By transitivity
of differents,
\[
        \mathfrak D_{K_N/K}
        =
        \mathfrak D_{K_N/B_N}\,
        \mathfrak D_{B_N/K}\mathcal O_{K_N}.
\]
The assumed boundedness of \(v_K(\mathfrak D_{K_N/K})\), together with
\(v_N=e_*v_K\) on \(K_N\), therefore implies that
\(v_N(\mathfrak D_{K_N/B_N})\) is bounded independently of \(N\).

As \(\mathcal O_{K_N}=\mathcal O_{B_N}[\rho_N]\), we have
\[
        \mathfrak D_{K_N/B_N}=(\Phi_N'(\rho_N)).
\]
Thus there is a constant \(D\) such that
\begin{equation}\label{eq:phi-prime-bound}
        v_N(\Phi_N'(\rho_N))\leq D
\end{equation}
for all \(N>n_0\).

For every fixed $q\geq1$,
\begin{equation}\label{eq:Q-derivative-bound}
        v_N\bigl(Q_N^{(q)}(\rho_N)\bigr)\geq r
\end{equation}
for all $N$.  Indeed, Lemma~\ref{lem:derivative-growth} gives
$H_N^{(s)}\in\pi_K^r\mathcal O_{E_N}[X]$ for every $s\geq1$, while the derivatives of
$P_N$ have coefficients in $\mathcal O_{B_N}$ and $\rho_N$ is integral.  Hence every
term in the chain-rule expansion of $Q_N^{(q)}(\rho_N)$ is divisible by
$\pi_K^r$, and $v_N(\pi_K^r)=e_*r\geq r$.

We prove the claim by induction on $j$.  After differentiating
\eqref{eq:phi-expansion} $j$ times and evaluating at $\rho_N$, all terms with
index $>j$ vanish and the $j$-th term contributes
\[
        j!\,\Phi_N'(\rho_N)^j g_{N,j}(\rho_N).
\]
The remaining terms only involve $g_{N,\ell}$ with $\ell<j$ and their
derivatives, evaluated at $\rho_N$.  For $j=1$ there are no such terms.  For
$j>1$, the induction hypothesis implies that these remaining terms all have
valuation at least $r$ minus a constant depending only on $j$: derivatives do
not decrease Gauss valuation, and all derivatives of $\Phi_N$ evaluated at
$\rho_N$ are integral.  Combining this with
\eqref{eq:Q-derivative-bound}, we get
\[
        v_N\bigl(j!\,\Phi_N'(\rho_N)^j g_{N,j}(\rho_N)\bigr)\geq r-R'_j
\]
for some constant $R'_j$.  Using \eqref{eq:phi-prime-bound}, we deduce
\[
        v_N(g_{N,j}(\rho_N))\geq r-e_* v_K(j!)-jD-R'_j.
\]
Applying Lemma~\ref{lem:gauss-general} to the integer part of the lower bound, and enlarging the constant if necessary,
we obtain
\[
        v_{B_N,G}(g_{N,j})\geq r-R_j .
\]
\end{proof}

We now use the coefficient growth obtained in Lemma~\ref{lem:growth-g} to rule
out bounded different valuations.

\begin{proposition}\label{prop:unbounded-different}
If the tower is not unramified, then the valuations $v_K(\D_{K_N/K})$ of the differents are unbounded.
\end{proposition}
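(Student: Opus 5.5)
We argue by contradiction: assume the valuations $v_K(\D_{K_N/K})$ are bounded, so that Lemma~\ref{lem:growth-g} applies. The idea is to evaluate the expansion \eqref{eq:phi-expansion} at $X=0$ rather than at $\rho_N$. This produces an element $w:=P_N(0)$ of the \emph{fixed-ramification} field $B_N$ which solves the inverse problem $H_N(w)\approx u_0$ to high precision. The contraction estimate of Lemma~\ref{lem:contraction} then forces $v_K(w)\to0$. This contradicts $w\in\mathfrak m_{B_N}$, where the ramification index is only $e_*$.

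First we show that $Q_N(0)$ is highly divisible. Since $\Phi_N$ is Eisenstein over $B_N$, we have $v_{B_N}(\Phi_N(0))=1$. The $g_{N,j}$ are integral, so $v_{B_N}(g_{N,j}(0))\ge v_{B_N,G}(g_{N,j})$. From
\[
        Q_N(0)=\sum_{j\geq1}\Phi_N(0)^j g_{N,j}(0)
\]
we split the sum at a threshold $J$, chosen with $J>e_*C_0$.
\begin{itemize}[leftmargin=*]
\item For $j\ge J$, the term has $v_{B_N}\ge j\ge J$.
\item For $1\le j<J$, Lemma~\ref{lem:growth-g} gives $v_{B_N}\ge j+r-R_j\ge r-\max_{j<J}R_j$.
\end{itemize}
For $r$ large, i.e.\ $N$ large, we therefore get $v_{B_N}(Q_N(0))\ge J>e_*C_0$, that is, $v_K(Q_N(0))>C_0$.

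Next we set $w:=P_N(0)\in\mathfrak m_{B_N}$, using \eqref{eq:PN-zero}, and put $y_0:=H_N(w)=u_0+Q_N(0)$. By \eqref{eq:u0-bound} we have $v_K(u_0)\le C_0<v_K(Q_N(0))$. Hence $v_K(y_0)=v_K(u_0)\in(0,C_0]$, and in particular $y_0\neq0$. Now define the backward chain
\[
        w_r:=w,\qquad w_j:=F_j(w_{j+1})\qquad(0\le j<r),
\]
so that $w_0=y_0$. Each $F_j$ has integral coefficients and satisfies $F_j(0)=0$, so each $F_j$ maps $\mathfrak m_{B_N}$ into itself. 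Thus every $w_j$ lies in $\mathfrak m_{B_N}$, and the $F_j$ satisfy the hypotheses of Lemma~\ref{lem:contraction} with $L=B_N$. That lemma, together with monotonicity of $\psi$, gives
\[
        v_K(w)\le\psi^{\circ r}(v_K(y_0))\le\psi^{\circ r}(C_0).
\]
By Remark~\ref{rem:psi}, the right-hand side tends to $0$ as $r\to\infty$.

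On the other hand, $w$ is a nonzero element of $\mathfrak m_{B_N}$: it is nonzero because $H_N(w)=y_0\ne0$. Since $B_N/K$ has ramification index $e_*$, this gives $v_K(w)\ge1/e_*$ for every $N$. This contradicts the previous bound for $r$ large, and so the differents are unbounded.

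The main obstacle is the first step. The threshold $J$ must be fixed before letting $r\to\infty$, because Lemma~\ref{lem:growth-g} only controls finitely many $j$ uniformly. The tail must then be handled purely by the Eisenstein valuation of $\Phi_N(0)$. One should also check that monotonicity of $\psi$ is what justifies passing from $v_K(y_0)$ to $C_0$.
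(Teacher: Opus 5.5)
Your proposal is correct and follows essentially the same route as the paper's proof. Both evaluate the $\Phi_N$-expansion at $X=0$ and use the Eisenstein property together with Lemma~\ref{lem:growth-g} to force $v_K(H_N(P_N(0))-u_0)>C_0$. Both then run the contraction estimate on $P_N(0)\in\mathfrak m_{B_N}$ against the lower bound $1/e_*$. The differences are cosmetic: you fix one threshold $J>e_*C_0$ instead of letting the valuation tend to infinity, and you make explicit the monotonicity of $\psi$ and the fact that the $F_j$ preserve $\mathfrak m_{B_N}$, both of which the paper leaves implicit.
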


\begin{proof}
Assume, for contradiction, that the valuations $v_K(\D_{K_N/K})$ are bounded. Evaluate \eqref{eq:phi-expansion} at $X=0$:
\[
        Q_N(0)=H_N(P_N(0))-u_0
        =\sum_{j\geq1}\Phi_N(0)^j g_{N,j}(0).
\]

Since $\Phi_N$ is Eisenstein over $B_N$, $\Phi_N(0)$ is a uniformizer of $B_N$.
Fix an integer $\lambda>0$.  For $j>\lambda$, the $j$-th summand on the right
has $v_N$-valuation $>\lambda$.  For $1\leq j\leq\lambda$,
Lemma~\ref{lem:growth-g} makes the valuations of $g_{N,j}(0)$ tend to infinity
with $N$.  Hence $v_N(Q_N(0))\to\infty$, and therefore, because
$e(B_N/K)=e_*$ is fixed,
\[
        v_K\bigl(H_N(P_N(0))-u_0\bigr)\longrightarrow\infty.
\]

Choose $N>n_0$ sufficiently large so that
\[
        v_K\bigl(H_N(P_N(0))-u_0\bigr)>C_0
\]
and
\[
        \psi^{\circ r}(C_0)<\frac{1}{e_*}.
\]
By \eqref{eq:u0-bound}, the first inequality implies
\[
        v_K\bigl(H_N(P_N(0))\bigr)=v_K(u_0).
\]
If $P_N(0)=0$, then $H_N(P_N(0))=H_N(0)=0$, because each $F_j(0)=0$, and hence
$H_N(P_N(0))-u_0=-u_0$, contradicting the choice of $N$.  Thus
$P_N(0)\neq0$.

By \eqref{eq:PN-zero}, $P_N(0)\in\mathfrak m_{B_N}$.  Since $e(B_N/K)=e_*$,
every nonzero element of $\mathfrak m_{B_N}$ has $v_K$-valuation at least
$1/e_*$.  Hence
\begin{equation}\label{eq:PN-lower}
        v_K(P_N(0))\geq \frac{1}{e_*}.
\end{equation}

On the other hand, set $z_r=P_N(0)$ and $z_j=F_j(z_{j+1})$ for
$j=r-1,\ldots,0$.  Then all $z_j$ lie in the maximal ideal, and
$z_0=H_N(P_N(0))$ is nonzero.  Lemma~\ref{lem:contraction} gives
\[
        v_K(P_N(0))
        \leq \psi^{\circ r}\bigl(v_K(z_0)\bigr)
        = \psi^{\circ r}\bigl(v_K(u_0)\bigr)
        \leq \psi^{\circ r}(C_0)
        < \frac{1}{e_*}.
\]
This contradicts \eqref{eq:PN-lower}. 
\end{proof}

We can now prove the dichotomy.

\begin{proof}[Proof of Theorem~\ref{thm:main}]
If $e_n=1$ for every $n$, then $K_n=E_n\subset K^{\rm ur}$ for every $n$, so
$K_\infty/K$ is unramified.
Otherwise the tower is not unramified.  Proposition~\ref{prop:unbounded-e}
gives $e_n\to\infty$, and Proposition~\ref{prop:unbounded-different} shows that the valuations of the finite-level differents $v_K(\D_{K_n/K})$
are unbounded. The different
criterion recalled at the beginning of this section then implies that
$K_\infty/K$ is deeply ramified.
\end{proof}

\section{The Frobenius-type case}\label{sec:frobenius-type}

We record a useful sharpening of Theorem~\ref{thm:main} when
\[
        f(X)\equiv X^{q}\pmod{\mathfrak m_K},
        \qquad q=p^a,
\]
for some $a\geq1$: then the unramified alternative is trivial.

\begin{lemma}\label{lem:frob-difference}
Assume that
\[
        f(X)\equiv X^q\pmod{\mathfrak m_K},
        \qquad q=p^a.
\]
Let $L/K$ be an unramified algebraic extension, and let $x,y\in \mathcal O_L$ satisfy
$x\equiv y\pmod{\mathfrak m_L}$.  If $x\neq y$, then
\[
        v_K\bigl(f(x)-f(y)\bigr)\geq v_K(x-y)+1.
\]
\end{lemma}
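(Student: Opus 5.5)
Write $f(X)=X^q+\pi_K g(X)$ with $g\in\OK[X]$. This is possible because $f$ is monic with reduction $X^q$; the degree of $f$ may exceed $q$, and the higher coefficients then lie in $\mathfrak m_K$. Put $h=x-y$, so that $s:=v_K(h)>0$; since $L/K$ is unramified, $s$ is a positive integer, hence $s\geq1$. Then
\[
        f(x)-f(y)=(x^q-y^q)+\pi_K\bigl(g(x)-g(y)\bigr).
\]
I will bound the two pieces separately.

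The second piece is immediate: $g(x)-g(y)$ is divisible by $x-y$ in $\mathcal O_L$, because $g$ has integral coefficients and $x,y$ are integral. Hence
\[
        v_K\bigl(\pi_K(g(x)-g(y))\bigr)\geq 1+s.
\]

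For the first piece I will show $v_K(x^q-y^q)\geq s+1$. Write $x=y+h$ and expand:
\[
        x^q-y^q=\sum_{i=1}^{q}\binom{q}{i}y^{q-i}h^i.
\]
For $1\le i<q$ the binomial coefficient $\binom qi$ is divisible by $p$, and $v_K(p)=e(K/\mathbb Q_p)\geq1$, so each such term has valuation at least $1+is\geq s+1$. The last term $h^q$ has valuation $qs\geq 2s\geq s+1$, using $q\geq2$ and $s\geq1$. This is the one place where unramifiedness of $L$ enters: it forces $s\geq1$, which gives $qs\geq s+1$ even at $i=q$.

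Combining the two estimates with the ultrametric inequality gives $v_K(f(x)-f(y))\geq s+1$, which is the claim. I do not expect a real obstacle. The only point needing care is the decomposition of $f$, namely that $f-X^q$ has all coefficients in $\mathfrak m_K$ even when $\deg f>q$, which follows from the congruence $f\equiv X^q \pmod{\mathfrak m_K}$ coefficientwise. The standing hypotheses only ask that $x\equiv y$ modulo $\mathfrak m_L$, and this is exactly what gives $s\geq 1$ in the unramified field $L$.
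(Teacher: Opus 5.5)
Your proof is correct and is essentially the paper's argument: the paper writes $x^q-y^q=(x-y)^q+\pi_K B(x,y)(x-y)$, which is your binomial expansion with the terms $1\le i<q$ grouped together, and it bounds $\pi_K\bigl(A(x)-A(y)\bigr)$ and $(x-y)^q$ exactly as you do, using $v_K(x-y)\geq 1$ from unramifiedness. One small aside: since $f$ is monic, the congruence $f\equiv X^q\pmod{\mathfrak m_K}$ forces $\deg f=q$, so your remark about the case $\deg f>q$ is vacuous, though harmless.
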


\begin{proof}
Write $f(X)=X^q+\pi_K A(X)$ with $A\in \mathcal O_K[X]$.  Since $q$ is a power of $p$,
\[
        x^q-y^q=(x-y)^q+\pi_K B(x,y)(x-y)
\]
for some $B\in \mathcal O_K[X,Y]$.  Therefore
\[
        f(x)-f(y)=(x-y)^q+
        \pi_K\bigl(B(x,y)(x-y)+A(x)-A(y)\bigr).
\]
Since $A(X)-A(Y)$ is divisible by $X-Y$ in $\mathcal O_K[X,Y]$, the second summand has
valuation at least $1+v_K(x-y)$.  The first summand has valuation
$qv_K(x-y)$.  Because $L/K$ is unramified and $x\equiv y\pmod{\mathfrak m_L}$,
we have $v_K(x-y)\geq1$; hence $qv_K(x-y)\geq v_K(x-y)+1$.  The claimed
inequality follows.
\end{proof}

We now show that this estimate rules out genuinely unramified inverse branches
in the Frobenius-type case.

\begin{proposition}\label{prop:frob-unramified-trivial}
Assume that \(f(X)\equiv X^q\pmod{\mathfrak m_K}\), with \(q=p^a\) and
\(a\geq1\). Let \(t_0\in \mathcal O_K\), and let \((t_n)_{n\geq0}\) be a compatible
backward orbit, \(f(t_{n+1})=t_n\). If \(K_n=K(t_n)\) is unramified over \(K\)
for every \(n\), then in fact
\[
        t_n\in K\qquad\text{for every }n.
\]
Consequently \(K_\infty=K\).
\end{proposition}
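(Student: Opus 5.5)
The plan is to combine Lemma~\ref{lem:frob-difference} with the fact that, in the residue field, $\bar f$ is the bijection $x\mapsto x^q$ of any perfect field. Put $L=K_\infty\subset K^{\rm ur}$; it is unramified, and every $t_n$ lies in $\mathcal O_L$. First I will show that all residues are compatible with $K$. The residues $\bar t_n\in\bar k$ satisfy $\bar t_{n+1}^{\,q}=\bar t_n$. Since $\bar t_0\in k$ and $k$ is finite, hence perfect, the unique $q$-th root of $\bar t_n$ in $\bar k$ already lies in $k$. By induction, $\bar t_n\in k$ for all $n$.

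Next I will build a competing orbit inside $K$. For each $n$ and each $N\ge n$, choose a lift $\beta_N\in\mathcal O_K$ of $\bar t_N$, and set $s^{(N)}_n:=f^{\circ(N-n)}(\beta_N)\in\mathcal O_K$. The pairs $(s^{(N)}_j,t_j)$ for $n\le j\le N$ satisfy $s^{(N)}_j\equiv t_j\pmod{\mathfrak m_L}$, because both reduce to $\bar t_j$. Moreover $f(s^{(N)}_{j+1})=s^{(N)}_j$ and $f(t_{j+1})=t_j$. Applying Lemma~\ref{lem:frob-difference} iteratively from level $N$ down to level $n$ gives
\[
v_K\bigl(t_n-s^{(N)}_n\bigr)\ \ge\ v_K\bigl(t_N-\beta_N\bigr)+(N-n)\ \ge\ 1+(N-n).
\]
At any step where the two elements coincide, all lower levels coincide as well, and the bound is trivially true with value $+\infty$.

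Letting $N\to\infty$, we get $s^{(N)}_n\to t_n$ in the complete field $\widehat L$. Since $s^{(N)}_n\in K$ and $K$ is complete, hence closed in $\widehat L$, it follows that $t_n\in K$. Consequently $K_n=K$ for every $n$, and $K_\infty=K$.

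The main point to check is that Lemma~\ref{lem:frob-difference} applies uniformly along the whole chain. This requires all elements to lie in one unramified field, and here $K_N$ works. It also requires the residue congruence at every level, which holds because $f$ reduces to $X^q$, so it maps congruent elements to congruent elements. The residue step is the only place where the Frobenius form is really needed beyond the lemma: it is what guarantees that the lifts $\beta_N$ can be taken in $\mathcal O_K$ rather than in a growing unramified extension. Everything else is routine.
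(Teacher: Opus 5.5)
Your proof is correct, but its final step works differently from the paper's.

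\textbf{The paper's route.} It fixes $\sigma\in\operatorname{Gal}(K^{\rm ur}/K)$ and sets $\delta_m=\sigma(t_m)-t_m$. It notes that $\delta_N\neq0$ forces $\delta_m\neq0$ for all $m\ge N$. It then applies Lemma~\ref{lem:frob-difference} to $\sigma(t_{m+1})$ and $t_{m+1}$, which gives $v_K(\delta_{N+r})\le v_K(\delta_N)-r$. This is absurd for large $r$, since the $\delta_m$ are integral. So each $t_N$ is Galois-invariant and lies in $K$.

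\textbf{Your route.} You compare the branch with the $K$-rational pseudo-orbits $f^{\circ(N-n)}(\beta_N)$. You run the same contraction estimate downward from level $N$ and get $t_n\equiv f^{\circ(N-n)}(\beta_N)\pmod{\pi_K^{N-n+1}}$. You then conclude by closedness of $K$.

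\textbf{Shared ingredients.} Both arguments rest on the same two facts. First, the residues lie in $k$ by perfectness; this gives $\sigma(t_m)\equiv t_m$ in the paper and lets you take $\beta_N\in\mathcal O_K$. Second, Lemma~\ref{lem:frob-difference} is applied inside the unramified field $K_N$. You also correctly handle the hypothesis $x\neq y$ of that lemma.

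\textbf{What each buys.} The paper's version is a short descent by contradiction using the Galois action on $K^{\rm ur}$. Yours replaces Galois theory with completeness of $K$ and is constructive: it exhibits $t_n=\lim_N f^{\circ(N-n)}(\beta_N)$ for any choice of lifts. Since $\bar t_N$ is the unique $q^N$-th root of $\bar t_0$ in $k$, it also shows that when the branch is unramified, $t_0$ and the whole branch are determined by $\bar t_0$ alone. For $f=X^q$ this forces $t_0$ to be the Teichm\"uller lift of $\bar t_0$.
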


\begin{proof}
Since $K_n/K$ is unramified, all $t_n$ lie in $K^{\rm ur}$.  Reducing
$f(t_{n+1})=t_n$ gives
\[
        \overline t_{n+1}^{\,q}=\overline t_n.
\]
The map $x\mapsto x^q$ is an automorphism of every finite residue field and
preserves the subfield $k$.  Since $\overline t_0\in k$, induction gives
$\overline t_n\in k$ for every $n$.

Fix $N\geq0$ and let
$\sigma\in\operatorname{Gal}(K^{\rm ur}/K)$.  We claim that
$\sigma(t_N)=t_N$.  Suppose not, and put
\[
        \delta_m:=\sigma(t_m)-t_m\qquad(m\geq N).
\]
Then $\delta_N\neq0$.  Moreover, if $\delta_m\neq0$, then
$\delta_{m+1}\neq0$, since $\delta_{m+1}=0$ would imply
\[
        \delta_m=\sigma(f(t_{m+1}))-f(t_{m+1})=0.
\]
Thus $\delta_m\neq0$ for all $m\geq N$.

Because the residues $\overline t_m$ lie in $k$, which is fixed by $\sigma$, we
have
\[
        \sigma(t_m)\equiv t_m\pmod{\mathfrak m_{K^{\rm ur}}}.
\]
Applying Lemma~\ref{lem:frob-difference} to
$x=\sigma(t_{m+1})$ and $y=t_{m+1}$ gives
\[
        v_K(\delta_m)
        =v_K\bigl(f(\sigma(t_{m+1}))-f(t_{m+1})\bigr)
        \geq v_K(\delta_{m+1})+1.
\]
Therefore
\[
        v_K(\delta_{N+r})\leq v_K(\delta_N)-r
\]
for all $r\geq0$.  This is impossible for $r$ large, because each nonzero
$\delta_{N+r}$ is integral and hence has nonnegative valuation.  Thus
$\sigma(t_N)=t_N$ for every $\sigma\in\operatorname{Gal}(K^{\rm ur}/K)$, and so
$t_N\in K$.  Since $N$ was arbitrary, all $t_N$ lie in $K$.
\end{proof}

\begin{corollary}\label{cor:frob-trivial-or-deep}
Assume that $f(X)\equiv X^{p^a}\pmod{\mathfrak m_K}$.  Then every compatible
backward orbit starting from $t_0\in \mathcal O_K$ satisfies the sharper alternative
\[
        K_\infty=K
        \qquad\text{or}\qquad
        K_\infty/K\text{ is deeply ramified}.
\]
\end{corollary}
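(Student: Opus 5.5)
The corollary follows by combining Theorem~\ref{thm:main} with Proposition~\ref{prop:frob-unramified-trivial}. We split into the two cases of the dichotomy in Theorem~\ref{thm:main}. First observe that the Frobenius-type hypothesis is a special case of the standing hypothesis \eqref{eq:derivative-hypothesis}. Indeed, $f(X)=X^{q}+\pi_K A(X)$ with $A\in\OK[X]$, so
\[
f'(X)=qX^{q-1}+\pi_K A'(X)\in\mathfrak m_K\OK[X],
\]
because $p\mid q$. The degree satisfies $\deg f\geq q\geq 2$ once we note that $f$ is monic and $\bar f=X^q$, so $\deg f=q$. Hence Theorem~\ref{thm:main} applies to any compatible backward orbit starting from $t_0\in\OK$.

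By Theorem~\ref{thm:main}, either $K_\infty/K$ is deeply ramified, and we are done, or $K_\infty/K$ is unramified. In the latter case each $K_n\subset K_\infty$ is a finite unramified extension of $K$. This is exactly the hypothesis of Proposition~\ref{prop:frob-unramified-trivial}, which then gives $t_n\in K$ for all $n$. Hence $K_n=K$ for every $n$, and $K_\infty=\bigcup_n K_n=K$.

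There is essentially no obstacle here. The only point to check is that the Frobenius-type congruence implies the derivative hypothesis and the degree condition required by Theorem~\ref{thm:main}, which was verified above. All the substantive work is already contained in Theorem~\ref{thm:main} and in the contraction argument of Lemma~\ref{lem:frob-difference} used in Proposition~\ref{prop:frob-unramified-trivial}.
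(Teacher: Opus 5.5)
Your proposal is correct and is the same argument as the paper, which proves the corollary simply by combining Theorem~\ref{thm:main} with Proposition~\ref{prop:frob-unramified-trivial}. Your explicit check that the Frobenius-type congruence (with $a\geq1$ and $f$ monic) gives the derivative hypothesis \eqref{eq:derivative-hypothesis} and $\deg f=p^a\geq2$ is a detail the paper leaves implicit.
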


\begin{proof}
This follows from Theorem~\ref{thm:main} and
Proposition~\ref{prop:frob-unramified-trivial}.
\end{proof}

\section{Examples and the APF question}
\label{sec:apf-nonapf}

We finish with examples illustrating the range of the dichotomy. In particular,
the non-unramified alternative may be deeply ramified without being APF at the
algebraic level.

\subsection{A deeply ramified tower which is not APF}

Theorem~\ref{thm:main} proves deep ramification in the non-unramified case.
It is natural to ask whether the conclusion can be strengthened to the APF
property.  In the classical iterated-tower situations studied by Berger and by
Cais--Davis--Lubin~\cite{BergerIterated,CaisDavisLubin}, one starts from a
uniformizer and the relevant polynomials are Eisenstein at every level.  This
gives much more precise ramification control than the argument above, and leads
to APF, often strictly APF, extensions.

In the present generality one cannot replace ``deeply ramified'' by ``APF''.
Fesenko had already shown that deeply ramified extensions need not be APF
\cite[Secs.~2.1--2.4]{FesenkoDeeply}.  The point here is more specific: the
same obstruction appears in an explicit compatible inverse branch of a single
polynomial satisfying \eqref{eq:derivative-hypothesis}.  The obstruction is the
possible coexistence of a deeply ramified wild part with an infinite unramified
residue-theoretic part.  We shall use the standard necessary condition that an
APF extension has finite maximal unramified subextension; see
\cite{WintenbergerNorms,FesenkoDeeply}.

The following example realizes this mechanism in the elementary tower generated by
successive $2p$-th roots of $1+p$: the prime-to-$p$ component produces the
unramified part, while the $p$-part forces ramification.

\begin{proposition}
\label{prop:deep-not-apf}
Assume that $p$ is odd and let $K=\mathbb Q_p$.  Let
\[
        f(X)=X^{2p}.
\]
Then there exists a compatible backward orbit
\[
        f(t_{n+1})=t_n,\qquad t_0=1+p,
\]
such that the associated iterated extension
\[
        K_\infty=\bigcup_{n\geq0}\mathbb Q_p(t_n)
\]
is deeply ramified but not APF.
\end{proposition}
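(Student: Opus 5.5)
The plan is to build the orbit explicitly as a product $t_n=\omega_n\beta_n$. The factor $\omega_n$ will be a $2$-power root of unity, which supplies an infinite unramified part. The factor $\beta_n$ will be a principal unit generating the Kummer tower of $1+p$, which supplies ramification. Deep ramification will then come from Theorem~\ref{thm:main}, and the failure of APF from the standard fact, recalled above, that an APF extension has a finite maximal unramified subextension.

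\emph{Construction.} First fix a compatible system $\gamma_n$ of $p$-power roots of $1+p$, so that $\gamma_0=1+p$ and $\gamma_{n+1}^p=\gamma_n$. Each $\gamma_n$ is a principal unit of $\mathbb Q_p(\gamma_n)$, so $\mathbb Z_p$-exponents act on it, and $2^{-n}\in\mathbb Z_p^\times$ because $p$ is odd. Put $\beta_n:=\gamma_n^{2^{-n}}$. Then
\[
\beta_{n+1}^{2p}=\gamma_{n+1}^{p\cdot 2^{-n}}=\gamma_n^{2^{-n}}=\beta_n ,
\qquad \beta_0=1+p .
\]
Next fix a compatible system $\varepsilon_n$ of primitive $2^n$-th roots of unity, so that $\varepsilon_{n+1}^2=\varepsilon_n$. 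Put $\omega_n:=\varepsilon_n^{c_n}$, where $c_n$ is an integer with $c_n\equiv p^{-n}\pmod{2^n}$. Since raising to the $p$-th power is an automorphism of $\mu_{2^\infty}$, one checks that $\omega_{n+1}^{2p}=\omega_n$ and $\omega_0=1$, and that each $\omega_n$ is primitive of order $2^n$. Therefore $t_n:=\omega_n\beta_n$ is a compatible backward orbit for $f(X)=X^{2p}$ with $t_0=1+p$.

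\emph{Not APF.} We have $t_n^{p^n}=\omega_n^{p^n}\beta_n^{p^n}$ and $\beta_n^{p^n}=(1+p)^{2^{-n}}\in\mathbb Q_p$. Hence $\omega_n^{p^n}\in K_n$, and since $p^n$ is prime to $2^n$ this element generates $\mu_{2^n}$. So $\mathbb Q_p(\mu_{2^n})\subset K_n$. This field is unramified over $\mathbb Q_p$ of degree equal to the order of $p$ modulo $2^n$, which tends to infinity. Thus $K_\infty$ contains an infinite unramified subextension, so it is not APF.

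\emph{Deeply ramified.} The derivative $f'=2pX^{2p-1}$ lies in $p\mathbb Z_p[X]$, so hypothesis \eqref{eq:derivative-hypothesis} holds. By Theorem~\ref{thm:main} it suffices to show that the tower is not unramified. We have $K_1\ni t_1/\omega_1=\beta_1$ and $\mathbb Q_p(\beta_1)=\mathbb Q_p(\gamma_1)$, because $2^{-1}$ is invertible in $\mathbb Z_p$.

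The main point is to show that $\gamma_1\notin\mathbb Q_p^{\rm ur}$, i.e.\ that $1+p$ is not a $p$-th power of a unit of $\mathbb Q_p^{\rm ur}$. Write such a unit as $\omega(1+x)$ with $\omega$ a Teichmüller root of unity and $v(x)\geq1$. Then $\omega^p(1+x)^p=1+p$ forces $\omega^p=1$, hence $\omega=1$, since $\mathbb Q_p^{\rm ur}$ contains no nontrivial $p$-th roots of unity. But then $(1+x)^p\in1+p^2\mathcal O$, whereas $v\bigl((1+p)-1\bigr)=1$, a contradiction. Hence $K_1$ is ramified and $K_\infty/\mathbb Q_p$ is deeply ramified.

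The delicate bookkeeping is the compatibility of the exponents $c_n$ and $2^{-n}$ in the construction; the rest is routine.
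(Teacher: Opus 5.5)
Your proof is correct and follows essentially the paper's route: $t_n$ is a primitive $2^n$-th root of unity times a principal unit, the infinite unramified part comes from $\mathbb Q_p(\mu_{2^n})\subset K_n$, ramification at level $1$ comes from $(1+p\mathcal O_E)^p\subseteq 1+p^2\mathcal O_E$ for unramified $E$, and the conclusion follows from Theorem~\ref{thm:main} together with the finiteness of the unramified part of an APF extension. The differences are only cosmetic: you build the principal units by $\mathbb Z_p$-exponentiation and recover $\omega_n^{p^n}$ from $t_n^{p^n}/(1+p)^{2^{-n}}$, whereas the paper chooses $w_{n+1}$ recursively with residue $1$, recovers $\zeta_n$ as the Teichm\"uller lift of the residue of $t_n$, and tests ramification on $w_1$ (a $p$-th root of $\sqrt{1+p}$) rather than on $\gamma_1$.
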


\begin{proof}
Choose a compatible system $(\zeta_n)_{n\geq0}$ of roots of unity such that
\[
        \zeta_0=1,\qquad \zeta_{n+1}^{2p}=\zeta_n,
\]
and such that $\zeta_n$ has exact order $2^n$ for $n\geq1$.  This is possible
because the map $\xi\mapsto \xi^p$ is an automorphism of the group of
$2$-power roots of unity.

Let $w_0=1+p$.  Choose recursively $w_{n+1}$ with residue $1$ and satisfying
\[
        w_{n+1}^{2p}=w_n.
\]
Such a choice is possible in the algebraic closure.  Define
\[
        t_n:=\zeta_n w_n.
\]
Then
\[
        t_{n+1}^{2p}=\zeta_{n+1}^{2p}w_{n+1}^{2p}=\zeta_nw_n=t_n,
\]
so $f(t_{n+1})=t_n$.

We first show that the tower has an infinite unramified subextension.  Since
$w_n\equiv1\pmod{\mathfrak m}$, the residue of $t_n$ is the residue of
$\zeta_n$.  The Teichmuller lift of this residue is $\zeta_n$, because
reduction is injective on roots of unity of order prime to $p$.  Therefore
\[
        \zeta_n\in \mathbb Q_p(t_n).
\]
Thus
\[
        K_\infty\supseteq \bigcup_{n\geq0}\mathbb Q_p(\zeta_n).
\]
The fields $\mathbb Q_p(\zeta_n)$ are unramified, because $2^n$ is prime to
$p$.  Their degrees are unbounded: indeed
\[
        [\mathbb Q_p(\zeta_n):\mathbb Q_p]=\operatorname{ord}_{2^n}(p),
\]
and if these orders were bounded, then some fixed positive power of $p$ would
be congruent to $1$ modulo $2^n$ for infinitely many $n$.  This is impossible.
Hence $K_\infty/K$ contains an infinite unramified subextension.  Therefore
$K_\infty/K$ is not APF.

It remains to show that the tower is not itself unramified.  Since
$\zeta_1=-1\in\mathbb Q_p$, the field $\mathbb Q_p(t_1)$ contains
$w_1=-t_1$.  We claim that $w_1$ is ramified over $\mathbb Q_p$.  Let $a$ be
the square root of $1+p$ congruent to $1$ modulo $p$.  Then
\[
        w_1^p=a.
\]
Moreover
\[
        a\equiv 1+\frac p2\pmod{p^2},
\]
so $a\notin1+p^2\mathbb Z_p$.
If $w_1$ belonged to an unramified extension $E/\mathbb Q_p$, then its residue
would satisfy $\overline{w_1}^{\;p}=1$, hence $\overline{w_1}=1$.  Thus
$w_1\in1+p\mathcal O_E$.  For $p$ odd one has
\[
        (1+p\mathcal O_E)^p\subseteq 1+p^2\mathcal O_E.
\]
It would follow that $a=w_1^p\in1+p^2\mathcal O_E$.  Since
$E/\mathbb Q_p$ is unramified, this contradicts
$a\notin1+p^2\mathbb Z_p$.  Therefore $\mathbb Q_p(w_1)/\mathbb Q_p$ is
ramified, and the iterated tower is not unramified.

By Theorem~\ref{thm:main}, every non-unramified tower of the present type is
deeply ramified.  Combining this with the infinite unramified subextension
proved above shows that $K_\infty/K$ is deeply ramified but not APF.
\end{proof}

The example separates the two mechanisms present in the tower: the \(p\)-part
forces ramification, while the prime-to-\(p\) residue component produces an
infinite unramified subextension.  Thus deep ramification is the optimal
conclusion in this generality.

\subsection{Infinite unramified towers}

The unramified alternative can also be genuinely infinite.

\begin{example}\label{ex:unramified-tower}
Let $K/\mathbb Q_p$ be finite with residue field of cardinality $q$, choose a
prime $\ell\neq p$; the orders of $q$ in
$(\mathbb Z/\ell^n\mathbb Z)^\times$ are unbounded.  Let
\[
        f(X)=X^{\ell p}\in\OK[X]
\]
and take $t_0=1$.  Choose inductively roots of unity $t_n\in\mu_{\ell^n}$, of
exact order $\ell^n$ for $n\geq1$, such
that
\[
        t_{n+1}^{\ell p}=t_n .
\]
This is possible since $\xi\mapsto\xi^\ell$ maps $\mu_{\ell^{n+1}}$ onto
$\mu_{\ell^n}$ and $\xi\mapsto\xi^p$ is an automorphism on $\ell$-power roots of
unity.

Then
\[
        f(t_{n+1})=t_n,
        \qquad K_n=K(t_n)\subset K^{\rm ur}.
\]
Since $\ell\neq p$, all $\ell^n$-th roots of unity lie in unramified
extensions of $K$.  The fields $K(t_n)$ therefore have unbounded residue
degree, and $K_\infty/K$ is an infinite unramified extension.
\end{example}

\subsection{Positive-valuation examples}

\begin{example}
Let $f(X)=X^p$ and take $t_0=\pi_K$.  A compatible choice
\[
        t_{n+1}^p=t_n
\]
produces elements with
\[
        v_K(t_n)=p^{-n}.
\]
Thus the ramification indices are unbounded; the tower is deeply ramified and
its completion is the familiar perfectoid field obtained by adjoining all
$p$-power roots of a uniformizer.
\end{example}

\begin{example}
Let
\[
        f(X)=X^{2p}+X^p+pX\in\mathbb Z_p[X].
\]
Then $f$ is monic and $f'(X)\in p\mathbb Z_p[X]$, while
\[
        \bar f(X)=X^{2p}+X^p
\]
has more than one monomial.  Thus the theorem is not restricted to pure Kummer
towers; if the branch is not unramified, its completion is again perfectoid by
Corollary~\ref{cor:perfectoid-completion}.
\end{example}

\end{document}